\documentclass[a4paper]{amsart}
\textwidth16.1cm \textheight21cm \oddsidemargin-0.1cm
\evensidemargin-0.1cm
\usepackage{amsmath, amscd}
\usepackage{amssymb, color, hyperref}

\usepackage[fontsize=11pt]{scrextend}

\theoremstyle{plain}
\newtheorem{theorem}{\bf Theorem}[section]
\newtheorem{proposition}[theorem]{\bf Proposition}

\newtheorem{corollary}[theorem]{\bf Corollary}

\theoremstyle{definition}

\newcommand{\N}{\mathbb N}
\newcommand{\Z}{\mathbb Z}
\newcommand{\R}{\mathbb R}
\newcommand{\Q}{\mathbb Q}

\newcommand{\DP}{\negthinspace : \negthinspace}

 \DeclareMathOperator{\supp}{supp}

\newcommand{\red}{{\text{\rm red}}}

\numberwithin{equation}{section}

\makeatletter
\newcommand\zeu@Scale{1.25}
\makeatother

\begin{document}

\author{Qinghai Zhong}

\address{University of Graz, NAWI Graz \\
Institute for Mathematics and Scientific Computing \\
Heinrichstra{\ss}e 36\\
8010 Graz, Austria}

\email{qinghai.zhong@uni-graz.at}
\urladdr{http://qinghai-zhong.weebly.com/}

\keywords{Mori domains, Mori monoids, C-monoids, $v$-noetherian monoids, elasticity, sets of lengths}

\subjclass[2010]{13A05, 13F05, 20M13}

\thanks{This work was supported by the Austrian Science Fund FWF, Project Number P28864-N35}

\begin{abstract}
Let $R$ be a Mori domain with complete integral closure $\widehat R$, nonzero conductor $\mathfrak f = (R \DP \widehat R)$, and suppose that both $v$-class groups $\mathcal C_v (R)$ and $\mathcal C_v (\widehat R)$ are finite. If $R/\mathfrak f$ is finite, then the elasticity of $R$ is either rational or infinite. If $R/\mathfrak f$ is artinian, then unions of sets of lengths of $R$ are almost arithmetical progressions with the same difference and  global bound. We derive our results in the setting of $v$-noetherian monoids.
\end{abstract}

\title{On the arithmetic of Mori monoids and domains}
\maketitle

\medskip
\section{Introduction} \label{1}
\medskip

By a $v$-noetherian monoid (or a Mori monoid),  we mean a commutative cancellative semigroup which has an identity element and which satisfies the ascending chain condition on $v$-ideals (equivalently, divisorial ideals). Thus a domain is a Mori domain if and only if its multiplicative monoid of nonzero elements is a Mori monoid.
Let $H$ be a $v$-noetherian monoid. Then every non-unit  $a \in H$ has a factorization as a finite product of atoms (irreducible elements).
If $a = u_1 \ldots u_k$, where $k \in \N$ and $u_1, \ldots, u_k$ are atoms of $H$, then $k$ is called the length of the factorization, and the set $\mathsf L (a) \subset \N$ of all possible factorization lengths  is called the set of lengths of $a$. Then $\mathcal L (H) = \{ \mathsf L (a) \mid a \in H \}$ denotes the system of sets of lengths of $H$ (with the convention that $\mathsf L (a) = \{0\}$ if $a \in H$ is invertible). The $v$-noetherian property implies that  all $L \in \mathcal L (H)$ are finite.
If there is an element $a \in H$ with $|\mathsf L (a)|>1$, then  $|\mathsf L (a^n)| > n$ for every $n \in \N$.  The structure of sets of lengths is described by a variety of invariants (see a recent survey \cite{Ge16c} and  proceedings \cite{C-F-G-O16, F-F-G-T-Z17}). In the present paper we study two of these invariants, namely unions of sets of lengths and elasticities, and we establish our results in the setting of $v$-noetherian monoids  with suitable algebraic finiteness conditions (see Theorem \ref{1.1} and Subsection \ref{examples}).

To recall the concept of unions of sets of lengths, let $k \in \N$ be given and to avoid trivial cases suppose that $H$ is not a group. Then
\[
\mathcal U_k (H) = \bigcup_{k \in L, L \in \mathcal L (H)} L \quad \text{
denotes the {\it union of sets of lengths} containing $k$}.
\]
We say that $H$ satisfies the {\it Structure Theorem for Unions} if there are $d \in \N$ and $M \in \N_0$ such that $\mathcal U_k (H)$ is an AAP (almost arithmetical progression; see \eqref{eq:defAAP}) with difference $d$ and bound $M$ for all sufficiently large $k \in \N$.
The Structure Theorem for Unions has attracted much attention in the last decade. In order to establish it in our setting we build on a recent result by Tringali (\cite{Tr18a}) and for that we need information on the growth behavior of the elasticities.

For every $k \in \N$,  $\rho_k(H)=\sup\mathcal U_k(H)$ is called the {\it $k$-elasticity}  of $H$ and
\[
\rho (H) = \sup \Big\{ \frac{\rho_k (H)}{k} \mid k \in \N \Big\} = \lim_{k \to \infty} \frac{\rho_k (H)}{k}
\]
is the {\it elasticity} of $H$. The study of the elasticities has found wide attention in the literature since the early days of factorization theory. Of course, the canonical questions are whether or not the $\rho_k (H)$s and $\rho (H)$ are finite, whether or not $\rho (H)$ is rational, or even  accepted (i.e., the supremem in the above definition is even a maximum), and to derive precise values for the $k$-th elasticities. We refer to \cite{An97a} (a survey from the late 1990s),  to \cite{Ka05a} (offering a characterization of the finiteness of the elasticity of finitely generated domains),  to \cite{Sc16a} (offering a survey on precise values of $k$th elasticities of Krull monoids),  and to \cite{Ge-Zh18a, Go19b, Zh19a} for recent progress.

Before we formulate our main result, recall that if $H$ is a
$v$-noetherian monoid  with nonempty conductor $(H \DP \widehat H)$, where $\widehat H$ is the complete integral closure of $H$,  then $\widehat H$ is Krull whence $\widehat H$ and $\widehat H_{\red}$ have a divisor theory.

\smallskip
\begin{theorem} \label{1.1}
Let $H$ be a $v$-noetherian monoid such that the conductor $(H \DP \widehat{H})\neq \emptyset$ and the class group $\mathcal C (\widehat{H})$ of $\widehat H$ is a torsion group.  Let $F=\widehat{H}^{\times}\times \mathcal F(P)$ such that  $\widehat{H}_{\red}\hookrightarrow \mathcal F(P)$ is a divisor theory and let $E=\{p\in P\mid \text{there exists }n\in \N \text{ such that } p^n\in H\widehat{H}^{\times}\}\,.$
\begin{enumerate}
\item If $H\subset F$ is not simple, then $\rho_k (H) = \infty$ for all sufficiently large $k \in \N$  and $H$ satisfies the Structure Theorem for Unions.

\item If $H\subset F$ is simple and the class semigroup $\mathcal C_{E}(H,F)$ is finite, then   $\rho(H)$ is rational  and $H$ satisfies the Structure Theorem for Unions.
\end{enumerate}
\end{theorem}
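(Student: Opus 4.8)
\emph{Setup and overall plan.} Since replacing $H$ by $H_{\red}$ affects neither $\mathcal L(H)$ nor the unions $\mathcal U_k(H)$, we may work with $H_{\red}$ inside $F=\widehat H^{\times}\times\mathcal F(P)$. Recall that $\widehat H$ is Krull, so $\widehat H_{\red}\hookrightarrow\mathcal F(P)$ is a divisor theory with torsion class group $\mathcal C(\widehat H)$; thus factorizations in $\widehat H$ are governed by the valuations $v_p$ ($p\in P$), and each class $[p]$ has finite order. The plan is to describe the factorizations of an element $a\in H$ through its \emph{type} -- the family $\bigl(v_p(a)\bigr)_{p\in P}$ together with the image of $a$ in the class semigroup $\mathcal C_E(H,F)$ -- and then to invoke the criterion of Tringali \cite{Tr18a}, by which the Structure Theorem for Unions follows once one knows that $\Delta(H)$ is finite and that the sequence $\bigl(\rho_k(H)\bigr)_{k\ge 1}$ is sufficiently well behaved (its values finite and its increments $\rho_{k+1}(H)-\rho_k(H)$ ultimately bounded). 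So the work is to determine, in each of the two cases, the size of $\Delta(H)$ and the growth of the $\rho_k(H)$.

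\emph{The non-simple case.} Here failure of simpleness should be exploited to produce a prime $p\in P$ for which the $p$-adic valuations of the atoms of $H$ are unbounded, while $H$ also has atoms $u$ with $v_p(u)=0$. Fixing such a $p$ and a conductor element $f\in H$ with $f\widehat H\subseteq H$, one combines a high power $p^{mn}$ (with $n$ a multiple of $\ord([p])$, realized inside $\widehat H$) with atoms of $H$ absorbing only a bounded amount of $p$ to build, for each large $k$, an element $a_k\in H$ having a factorization of length $k$ and factorizations of length $\ge cm$ for a fixed $c>0$ and arbitrary $m$. Letting $m\to\infty$ gives $\rho_k(H)=\infty$ for all sufficiently large $k$. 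A direct analysis of these witnessing elements then shows that, for all large $k$, $\mathcal U_k(H)$ differs from a ray $[\mu_k,\infty)$ by a subset of $[\mu_k-M,\mu_k)$, i.e.\ is an AAP with difference $1$ and a bound $M$ independent of $k$; hence the Structure Theorem for Unions holds.

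\emph{The simple case.} Assume $H\subseteq F$ is simple and $\mathcal C_E(H,F)$ is finite. The first step is to set up a transfer homomorphism $\theta\colon H_{\red}\to\mathcal B$, where $\mathcal B$ is a combinatorial (C-monoid-type) monoid assembled from the finite semigroup $\mathcal C_E(H,F)$ and from the class data of $\widehat H$, with $\mathsf L (a)=\mathsf L\bigl(\theta(a)\bigr)$ for all $a$; here simpleness is precisely what ensures that only the primes in $E$ have to be recorded, so that the class data entering $\mathcal B$ is finite and $\mathcal B$ is finitely generated. From the finiteness of $\mathcal B$ one reads off $\Delta(H)<\infty$ and $\rho_k(H)<\infty$ for all $k$. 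For the rationality of $\rho(H)$ one uses that $\mathcal B$ is finitely generated: the extremal factorization lengths in $\mathcal B$ are the optima of one fixed finite system of linear Diophantine (in)equalities whose right-hand side scales with $k$, so $k\mapsto\rho_k(H)$ is eventually quasi-linear and $\rho(H)=\lim_{k\to\infty}\rho_k(H)/k$ is its rational leading coefficient. Finally, since $\Delta(H)<\infty$, all $\rho_k(H)<\infty$, and the increments $\rho_{k+1}(H)-\rho_k(H)$ are eventually periodic (hence bounded), the criterion of \cite{Tr18a} yields the Structure Theorem for Unions.

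\emph{The main obstacle.} The heart of the matter is the simple case, and within it the quantitative step: showing that a long factorization in $H$ decomposes, up to a bounded defect, into blocks each governed by $\mathcal C_E(H,F)$ and by the Krull monoid $\widehat H$, and that the extremal lengths are then the optimal value of a single fixed finite linear program, eventually quasi-linear in $k$. Two points deserve care. First, $\mathcal C_E(H,F)$ is only a semigroup, so results for Krull monoids do not apply directly and must be replaced by the finitely-generated-monoid analysis above. Second, the reduction from $P$ to $E$ must be rigorously justified under the simpleness hypothesis, so that the primes outside $E$ -- exactly those driving the infinite elasticities of part~(1) -- are provably irrelevant to the factorizations that matter.
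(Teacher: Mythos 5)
There is a genuine gap, and it sits exactly at the point you yourself flag as "the heart of the matter". In the simple case you postulate a length-preserving transfer homomorphism $\theta\colon H_{\red}\to\mathcal B$ with $\mathsf L(a)=\mathsf L(\theta(a))$ and $\mathcal B$ \emph{finitely generated}, and you then read off rationality and eventual quasi-linearity of $k\mapsto\rho_k(H)$ from finite generation. No such $\theta$ is constructed, and in general it cannot exist: a reduced finitely generated monoid has accepted elasticity and very rigid $\rho_k$-growth, so your $\theta$ would force $\rho(H)$ to be attained; but there are simple monoids satisfying the hypotheses of part (2) (e.g.\ C-monoids with finite elasticity, cf.\ \cite[page 226]{Ge-HK06a} and \cite{Ba-Ch14a}) whose elasticity is \emph{not} accepted. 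Simplicity only guarantees that every atom of $H$ has nontrivial image under the projection $\phi\colon F\to\mathcal F(E)$ (every non-unit is divisible, support-wise, by some singleton essential prime, which lies in $E$), and $\phi$ is not a transfer homomorphism -- the primes outside $E$ obstruct the lifting of factorizations, so $\mathcal L(H)\ne\mathcal L(\phi(H))$ in general. What this projection honestly gives is only an \emph{upper} bound: factorization lengths in $H$ equal the lengths of their $\phi$-images, so $\rho(H)\le\sup\{|y|/|x|\}$ taken over the finitely generated fiber monoid $H_0=\{(x,y)\mid \pi^S(x)=\pi^S(y)\}$, a supremum attained at an atom of $H_0$ and hence rational. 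The matching \emph{lower} bound is where the real work lies and is missing from your sketch: one must realize this ratio asymptotically inside $H$ by explicitly building elements $W_n$, using the torsion hypothesis on $\mathcal C(\widehat H)$ to push powers of the primes \emph{outside} $E$ into $\widehat H$ and conductor-type elements to absorb them into $H$; the same construction, not finite generation, is what yields the bounds $\rho_{k+1}(H)-\rho_k(H)\le M$ and $\lambda_k(H)-\lambda_{k+1}(H)\le M$ needed for Tringali's criterion. (Also note that finiteness of $\mathcal C_E(H,F)$ does not make the "class data" finite by fiat: one must first pass to the quotient monoid $\beta(H)\subset \widehat H^{\times}/H^{\times}\times\mathcal F(\widetilde P)$, checking that this preserves $\mathcal L$, the Mori property, the conductor, the divisor theory and the set $E$, before one may assume $E$ itself is finite and $\phi(\mathcal A(H))$ is finite.)

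The non-simple case is also thinner than it looks. Non-simplicity gives you a minimal $H$-essential subset of $P$ with at least two elements; your stronger starting point (a prime $p$ with unbounded $p$-valuations among atoms together with atoms avoiding $p$) is not derived and is not the mechanism used: the effective point is that an atom $a$ whose support is such a minimal essential set can be split, after raising to powers made possible by the torsion hypothesis, into pieces each of which has \emph{bounded} minimal length precisely because no proper subset of $\supp_P(a)$ is $H$-essential; this gives elements with bounded $\min\mathsf L$ and unbounded $\max\mathsf L$, hence $\rho_k(H)=\infty$ for large $k$. More seriously, your final claim that "a direct analysis" shows $\mathcal U_k(H)$ is, up to a bounded set, a full ray -- an AAP with difference $1$ -- is both unproved and stronger than what is true: nothing forces difference $1$ (the difference in the Structure Theorem is $\min\Delta(H)$ when $\Delta(H)\ne\emptyset$, and consecutive lengths need not occur). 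What actually has to be verified, and what your sketch omits, is a uniform bound on $\max\Delta(\mathcal U_k(H))$ for all large $k$, obtained by exhibiting inside $\mathcal U_k(H)$ an explicit chain of values with bounded gaps coming from the family $W_n$ and its relation $W_{n+1}=a^{\alpha}W_n$; only then does \cite[Theorem 2.20]{Tr18a} apply.
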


In special cases the results of Theorem \ref{1.1} are already known. To begin with, suppose that $H$ is a Krull monoid with finite class group. Then $H = \widehat H = (H \DP \widehat H)$, $E=P$, and $\mathcal C (H) \cong \mathcal C (H,F)$, and it is well-known that the elasticity of $H$ is rational and $H$ satisfies the Structure Theorem for Unions. However, even in this special case, the statements may fail without the finiteness assumption on the class group. Indeed, D.D. Anderson and D.F. Anderson showed that for every real number $r$ there is a Dedekind domain with torsion class group and elasticity equal to $r$ (\cite{An-An92}). Furthermore,  there exist locally tame Krull monoids with finite set of distances that do not satisfy the Structure Theorem for Unions(\cite[Theorem 4.2]{F-G-K-T17}). Thus, even in the case of Krull monoids, finiteness assumptions on class groups are indispensable for obtaining arithmetical finiteness results. The interesting cases of Theorem \ref{1.1}, where the results are new, are those where $H$ is not completely integrally closed.
In Subsection \ref{examples}, after having introduced more terminology, we provide a list of monoids and domains  fulfilling the abstract assumptions above.
Here we formulate one corollary.

\begin{corollary} \label{1.2}
Let $R$ be a Mori domain with nonzero conductor $\mathfrak f = (R \DP \widehat R)$ such that the $v$-class groups $\mathcal C_v (\widehat R)$ and  $\mathcal C_v (R)$ are both finite. If the factor ring $S^{-1}\widehat R/S^{-1}\mathfrak f$ is quasi-artinian $($where $S$ is the monoid of regular elements of $R)$, then  $R$ satisfies the Structure Theorem for Unions.
\end{corollary}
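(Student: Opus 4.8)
The plan is to obtain Corollary~\ref{1.2} from Theorem~\ref{1.1} applied to the monoid $H = R \setminus \{0\}$ of nonzero elements of $R$. First I would translate the ring-theoretic data into monoid language: since $R$ is Mori, $H$ is $v$-noetherian; the complete integral closure $\widehat H$ is the monoid of nonzero elements of $\widehat R$; the conductor $(H \DP \widehat H)$ is the set of nonzero elements of $\mathfrak f$ and hence nonempty because $\mathfrak f \neq 0$; and there are canonical isomorphisms $\mathcal C(H) \cong \mathcal C_v(R)$ and $\mathcal C(\widehat H) \cong \mathcal C_v(\widehat R)$. In particular $\widehat H$ is Krull and $\mathcal C(\widehat H)$ is finite, hence torsion, so the standing assumptions of Theorem~\ref{1.1} hold for $H$. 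I would then fix $F = \widehat H^{\times} \times \mathcal F(P)$ with $\widehat H_{\red} \hookrightarrow \mathcal F(P)$ a divisor theory and let $E \subseteq P$ be the set defined there.

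Next I would invoke Theorem~\ref{1.1}. If $H \subset F$ is not simple, then part~(1) already gives that $H$, hence $R$, satisfies the Structure Theorem for Unions, and nothing further is needed. So the substantive case is that $H \subset F$ is simple, and by part~(2) it then suffices to prove that the class semigroup $\mathcal C_E(H,F)$ is finite. This is the only step that uses the finiteness of $\mathcal C_v(R)$ together with the quasi-artinian hypothesis, and I expect it to be the main obstacle.

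For that step I would analyse $\mathcal C_E(H,F)$ through its ``global'' and ``local'' contributions. Up to finitely many controlled pieces, its maximal subgroups are subquotients of $\mathcal C(H) \cong \mathcal C_v(R)$ (and of $\mathcal C(\widehat H) \cong \mathcal C_v(\widehat R)$), which are finite by hypothesis; the remaining information is the stabilized $p$-adic data, for the primes $p \in E$ supported on the conductor, of elements of $F$ reduced modulo $(H \DP \widehat H)$. After localizing, this data is carried by the ring $S^{-1}\widehat R / S^{-1}\mathfrak f$, and the quasi-artinian hypothesis is precisely what guarantees that this ring has only finitely many prime ideals with each localization of finite length; hence only finitely many primes $p$ are ``active'' and each only up to a bounded exponent. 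Consequently $\mathcal C_E(H,F)$ is finite, Theorem~\ref{1.1}(2) applies, and $R$ satisfies the Structure Theorem for Unions. The two delicate points I expect to dwell on are: making precise the reduction of the $E$-restricted residue data to the \emph{localized} quotient $S^{-1}\widehat R/S^{-1}\mathfrak f$ rather than to $\widehat R/\mathfrak f$ itself, so that the weaker quasi-artinian assumption suffices; and checking that in the simple case no prime of $P$ outside the support of $E$ can make $\mathcal C_E(H,F)$ infinite, so that the two finite contributions above really account for all of $\mathcal C_E(H,F)$.
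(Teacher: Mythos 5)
Your reduction to the monoid $H=R\setminus\{0\}$ and your treatment of the non-simple case via Theorem \ref{1.1}.1 are fine, but the simple case --- which you yourself flag as the main obstacle --- is not proved, and the claim you hang it on is the real gap. You assert that the hypotheses of Corollary \ref{1.2} force $\mathcal C_E(H,F)$ to be finite. Finiteness of (restricted) class semigroups is essentially the C-monoid condition, and as Subsection \ref{examples} of the paper points out, for relevant classes of domains this is tied to the finiteness of $R/\mathfrak f$; the corollary deliberately replaces that by the much weaker quasi-artinian hypothesis precisely to cover domains that are not C-domains. A quasi-artinian factor ring can have infinite residue fields, and the $(H,F)$-equivalence class of a prime $p$ lying over the conductor is determined by the residue data of $p^{-1}H\cap F$ modulo the images of $H$ and $\widehat H^{\times}$; your gloss of quasi-artinian as ``finitely many active primes, each with bounded exponent'' (which is itself only a guess at the notion, taken from \cite{Ka16b}) does not control the number of distinct classes $[p]_H^F$ with $p\in E$. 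So the sketch ``maximal subgroups are subquotients of $\mathcal C_v(R)$ and $\mathcal C_v(\widehat R)$ plus stabilized local data'' does not yield finiteness of $\mathcal C_E(H,F)$, and there is no reason to expect that finiteness from the stated hypotheses.

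The paper's own proof never invokes Theorem \ref{1.1}.2. It uses Theorems 5.5, 6.2 and 7.2 of \cite{Ka16b} (the weakly C-monoid machinery), which under the corollary's hypotheses give a dichotomy: if the tame degree $\mathsf t(R)$ is finite, the Structure Theorem for Unions follows from \cite[Theorems 3.5 and 4.2]{Ga-Ge09b}; if $\mathsf t(R)$ is infinite, then $\rho(R)=\infty$ and $R\setminus\{0\}$ is not simple in a suitable factorial monoid, so Theorem \ref{1.1}.1 applies. In other words, the simple case is handled through finiteness of the tame degree, not through finiteness of $\mathcal C_E(H,F)$. To repair your argument you would either have to actually prove your finiteness claim (which would need far more than the sketch and is doubtful at this level of generality) or re-route the simple case through the tame-degree dichotomy as the paper does.
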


\newpage
\medskip
\section{Preliminaries} \label{2}
\medskip

We denote by $\N$ the set of positive integers and denote $\N_0=\N\cup\{0\}$.   For real numbers $a, b \in \R$ we denote by $[a, b] = \{ x \in \Z \mid a \le x \le b \}$ the discrete interval between $a $ and $b$.  Let $A, B \subset \Z$ be subsets. Then  $A + B = \{a+b \mid a \in A, b \in B\}$ denotes their sumset and $\Delta (A)$ denotes the \ {\it set of distances}  of $A$, that is the set of all $d \in
\N$ for which there exists $l \in A$ such that $A \cap [l, l+d] =
\{l, l+d\}$. If $\emptyset \ne A \subset \N$, then $\rho (A) = \sup \{ a/b \mid a, b \in A\} \in \Q_{\ge 1} \cup \{\infty\}$ denotes the {\it elasticity} of $A$ and we set $\rho (\{0\})=1$.
A subset  $L
\subset \mathbb Z$ \ is called an {\it almost arithmetical
progression} ({\rm AAP} for short) with {\it difference} $d \in \N$ and
{\it bound} $M \in \N_0$ if
\begin{equation} \label{eq:defAAP}
L = y + (L' \cup L^* \cup L'') \subset y + d \mathbb Z
\end{equation}
where $L^*$ is a non-empty arithmetical progression with difference $d$  such that $\min L^* = 0$, $L' \subset [-M,-1]$, $L'' \subset \sup L^* + [1, M]$ (with the convention that $L'' = \emptyset$ if $L^*$ is infinite) and $y \in \mathbb Z$.

\noindent
\subsection{\bf Monoids and domains.} \label{arithmetic}
By a monoid, we mean a commutative cancellative semigroup with identity element. Thus, if $R$ is a domain, then its multiplicative semigroup of nonzero elements is a monoid. All ideal theoretic and arithmetic concepts are introduced in the monoid setting but will be used for monoids and domains.

Let $H$ be a monoid. We denote by $\mathcal A (H)$ the set of atoms of $H$, by $H^{\times}$ the group of invertible elements of $H$,  by $H_{\red} =H/H^{\times}$ the associated reduced monoid of $H$,   by $\mathsf q(H)$ the quotient group of $H$, and by
$$\widehat{H}=\{x\in \mathsf q(H)\mid \text{ there exists $c\in H$ such that }cx^n\in H \text{ for all }n\in \N\}$$
the complete integral closure of $H$.
For subsets $X,Y\subset \mathsf q(H)$, we set
\[
(X \DP Y)=\{a\in \mathsf q(H)\mid aY\subset X\} \quad \text{ and } \quad X_{v}=\big( H \DP (H \DP X)\big) \,.
\]
The subset $X$ is called a $v$-ideal (or a divisorial ideal) of $H$ if $X\subset H$ and $X_{v}=X$. The monoid $H$ is called a {\it $v$-noetherian} monoid (or a {\it Mori monoid}) if $H$ satisfies the ascending chain condition on $v$-ideals.

If an element $a
\in H$ has a factorization into $k$ atoms, say $a=u_1  \ldots  u_k$ with $k \in \N$ and  $u_1,\ldots, u_k\in \mathcal A(H)$, then $k$ is called a factorization length of $a$ and
\[
\mathsf L (a)  = \bigl\{ k\in \N \, \bigm| \,k \text{ is a factorization length of }a \bigr\} \subset \N_0 \quad \text{is the \ {\it set of
lengths} \ of $a$}\,.
\]
For $a \in H^{\times}$ we set $\mathsf L (a) = \{0\}$. Then
\[
\mathcal L(H)=\{\mathsf L(a)\mid a\in H\}   \qquad \text{resp.} \qquad \Delta (H) = \bigcup_{L \in \mathcal L (H)} \Delta (L)
\]
denotes the {\it system of sets of lengths} resp. the {\it set of distances} of $H$. If $H$ is $v$-noetherian, then all sets of lengths are finite and nonempty (\cite[Theorem 2.2.9]{Ge-HK06a}) and if there is a set of lengths that is not a singleton, then there are arbitrarily large sets of lengths. We say that $H$ satisfies the {\it Structure Theorem for Unions} if there are $d \in \N$ and $M \in \N_0$ such that $\mathcal U_k (H)$ is an AAP  with difference $d$ and bound $M$ for all sufficiently large $k \in \N$.
If $\Delta (H)$ is finite, $\rho_k (H) < \infty $ for all $k \in \N$, and the Structure Theorem for Unions holds with difference $d$ and some bound, then, by \cite[Corollary 2.3]{F-G-K-T17},  $d = \min \Delta (H)$ and
\[
\lim_{k \to \infty} \frac{|\mathcal U_k (H)|}{k} = \frac{1}{\min \Delta (H)} \Big( \rho (H) - \frac{1}{\rho (H)} \Big) \,.
\]
Let $F=F^{\times}\times \mathcal F(P)$ be a factorial monoid, where $\mathcal F (P)$ the  {\it free
abelian monoid}  with basis $ P$, and let $a \in F$.
Then $a$ has a unique representation of the form
\[
a = \varepsilon \prod_{p \in  P} p^{\mathsf v_p(a) } \quad \text{with $\varepsilon \in F^{\times}$ and } \quad
\mathsf v_p(a) \in \N_0 \ \text{ and } \ \mathsf v_p(a) = 0 \ \text{
for almost all } \ p \in  P \,.
\]
We  call $|a|= |a|_{\mathcal F ( P)} = \sum_{p \in  P}\mathsf v_p(a) \in \N_0$ the {\it length} of $a$ and
 $\supp_P (a)=\supp(a)=\{p\in P\mid \mathsf v_p(a)\neq 0\} \subset P$ the {\it support} of $a$.

\smallskip
\subsection{\bf Krull monoids.} \label{Krull}
A monoid $H$ is a {\it Krull monoid} if it satisfies one of the following equivalent conditions (\cite[Theorem 2.4.8]{Ge-HK06a}){\rm \,:}
\begin{itemize}
\item[(a)] $H$ is completely integrally closed and $v$-noetherian.
\item[(b)] $H$ has a divisor theory.
\item[(c)] There is a divisor homomorphism from $H$ to a factorial monoid.
\end{itemize}
Let $H$ be a  Krull monoid. Then there is a factorial monoid $F = H^{\times} \times \mathcal F (P)$ such that the inclusions $H \hookrightarrow F$ and  $H_{\red} \hookrightarrow F$ are divisor theories.  The group $\mathcal C (H) = \mathsf q (F)/\mathsf q (H_{\red})$ is the (divisor) class group of $H$ and it is isomorphic to the $v$-class group $\mathcal C_v (H)$ of $H$. If $H$ is $v$-noetherian with $(H \DP \widehat H) \ne \emptyset$, then $\widehat H$ is Krull and $H^{\times} = {\widehat H}^{\times} \cap H$ (\cite[Theorem 2.3.5]{Ge-HK06a}).
Note that a domain is a Mori domain (a Krull domain, or completely integrally closed) if and only if its multiplicative monoid of nonzero elements has the respective property. We refer to \cite{Ba00} for a survey on Mori domains and to the monographs \cite{HK98, Ge-HK06a} for an exposition of Krull monoids and domains.

\smallskip
\noindent
\subsection{\bf Class semigroups.} \label{class semigroups} Class semigroups were introduced in \cite{Ge-HK04a} and we follow that notation of \cite[Chapter 2]{Ge-HK06a}.
Let $F=F^{\times}\times \mathcal F(P)$ be a factorial monoid and $H\subset F$  a submonoid with $H^{\times}=F^{\times}\cap H$. We say two elements $x,y\in F$ are $(H,F)$-equivalent if $x^{-1}H\cap F=y^{-1}H\cap F$. Then $(H,F)$-equivalence is a congruence relation on $F$. For every $x\in F$, we denote by $[y]_H^F$ the $(H,F)$-equivalence class of $x$.
For a subset $E \subset F$, we define $\mathcal C_E(H,F)=\{[x]_H^F\mid x\in E\}$.
The semigroups
\[
\mathcal C(H,F)=\mathcal C_F(H,F) \quad \text{resp.} \quad \mathcal C^*(H,F)=\mathcal C_ {(F\setminus F^{\times}\})\cup \{1\}}(H,F)
\]
are   called the {\it class semigroup of $H$ in $F$} resp. the   {\it  reduced class semigroup of $H$ in $F$}.
A subset $E \subset P$ is called {\it $H$-essential} if $E=\supp_P(x)$ for some $x\in H\setminus H^{\times}$, and $H$ is called {\it simple} in $F $ if every minimal $H$-essential subset of $P$ is a singleton.

\smallskip
\noindent
\subsection{Monoids and domains satisfying the assumptions of Theorem \ref{1.1}.} \label{examples}
A submonoid $H$ of a factorial monoid $F$ with $H^{\times}=F^{\times}\cap H$ is called a C-{\it monoid} if the reduced class semigroup $\mathcal C^*(H,F)$ is finite. A domain is said to be a C-domain if its monoid of nonzero elements is a C-monoid. Since every C-monoid $H$ is $v$-noetherian with nonempty conductor $(H \DP \widehat H)$ and finite class group $\mathcal C (\widehat H)$, every C-monoid satisfies the assumptions made in Theorem \ref{1.1}. Thus the elasticity of a C-monoid is rational whenever it is finite. For certain classes of weakly Krull C-monoids (including orders in number fields) the elasticity is even accepted whenever it is finite (\cite[Theorem 4.4]{Ge-Zh18a} but this is not true for C-monoids in general (see \cite[page 226]{Ge-HK06a}, \cite{Ba-Ch14a}, and the literature therein).

We refer to \cite[Chapter 2]{Ge-HK06a} for the basics on C-monoids and to \cite{Re13a, Ge-Ra-Re15c, Cz-Do-Ge16a} for recent progress and new classes of examples. Here we just mention the most significant example from ring theory. Let $R$ be a Mori domain with nonzero conductor $\mathfrak f = (R \DP \widehat R)$ and finite class group $\mathcal C (\widehat R)$ (note that every C-domain is such a domain). If the factor ring $R/\mathfrak f$ is finite, then $R$ is a C-domain by \cite[Theorem 2.11.9]{Ge-HK06a}.

In general, the assumption on the finiteness of $R/\mathfrak f$ cannot be omitted (indeed, for some classes of domains it is equivalent for a domain to be a C-domain). In \cite{Ge-Ha08b} and \cite{Ka16b}, weakly C-monoids were studied and this concept allows to derive arithmetical finiteness results for Mori domains with nonzero conductor $\mathfrak f = (R \DP \widehat R)$ and finite class group $\mathcal C (\widehat R)$ but without the finiteness assumption on the factor ring $R/\mathfrak f$. Weakly C-monoids provide further examples where  Theorem \ref{1.1} can be applied.
However, we do not introduce this concept here but we demonstrate its consequences for the large class of Mori domains addressed in Corollary \ref{1.2}. Here is its short proof  (based on Theorem \ref{1.1}).

\begin{proof}[Proof of Corollary \ref{1.2}]
Suppose that the given factor ring is quasi-artinian. We use Theorems 5.5, 6.2., and 7.2 of \cite{Ka16b}. If $R$ has finite tame degree $\mathsf t (R)$, then $R$ satisfies the Structure Theorem for Unions by \cite[Theorems 3.5 and  4.2]{Ga-Ge09b}. Suppose the tame degree is infinite. Then the elasticity $\rho (R)$ is infinite and the monoid $R \setminus \{0\}$ is not simple in a (suitable) factorial monoid $F$. Thus Theorem \ref{1.1}.1 implies that $R$ satisfies the Structure Theorem for Unions.
\end{proof}

\medskip
\section{Proof of Theorem \ref{1.1} }\label{3}
\medskip

 We begin with the following proposition.

\begin{proposition}\label{3.1}
Let $H$ be a $v$-noetherian monoid with $(H \DP \widehat{H})\neq \emptyset$. Let $F=\widehat{H}^{\times}\times \mathcal F(P)$ such that  $\widehat{H}_{\red}\hookrightarrow \mathcal F(P)$ is a divisor theory, let $\widetilde{F}=\widehat{H}^{\times}/H^{\times}\times \mathcal F(\widetilde{P})$ with $\widetilde{P}=\{[p]_H^F\mid p\in P\}$, and let $\beta \colon \mathsf q(F)\rightarrow \mathsf q(\widetilde{F})$ be the unique homomorphism satisfying $\beta(p)=[p]_H^F$ for all $p\in P$ and $\beta(u)=uH^{\times}$ for all $u\in \widehat{H}^{\times}$.
\begin{enumerate}
\item  $\beta(H)$ is a reduced  submonoid of $\widetilde{F}$, $\beta(H)\cap \widetilde{F}=\{1\}$, $\beta^{-1}(\beta(H))=H$, and $\mathcal L(H)=\mathcal L(\beta(H))$.

\item  $\beta(\mathsf q(H))=\mathsf q(\beta(H))$ and   $\beta(H)$ is  $v$-noetherian.

\item $\widehat{\beta(H)}=\beta(\widehat{H})$ and $(\beta(H) \DP \widehat{\beta(H)})\neq \emptyset$.

\item $\widehat{\beta(H)}^{\times}=\widehat{H}^{\times}/H^{\times}$ and $\widehat{\beta(H)}_{\red}\hookrightarrow\mathcal F(\widetilde{P})$ is a divisor theory.

\item
\begin{align*}
&\{[p]_H^F\mid p\in P\text{ and there exists $n\in \N$ and $u\in \widehat{H}^{\times}/H^{\times}$ such that }u\beta(p)^n\in \beta(H)\}\\
=&\{[p]_H^F\mid p\in P\text{ and there exists $n\in \N$ and $u\in \widehat{H}^{\times}$ such that }up^n\in H\}\,.
\end{align*}

\end{enumerate}
\end{proposition}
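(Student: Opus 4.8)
I single out one observation about $(H,F)$-equivalence, the \emph{replacement lemma}, which drives all five assertions: if $p,p'\in P$ satisfy $[p]_H^F=[p']_H^F$ and $h\in H$ has $\mathsf v_p(h)\ge 1$, then $hp^{-1}p'\in H$; indeed, with $z=hp^{-1}\in F$ we have $zp=h\in H$, so $z\in p^{-1}H\cap F=(p')^{-1}H\cap F$, and hence $hp^{-1}p'=zp'\in H$. Writing $S_q=\{p\in P:[p]_H^F=q\}$ for the fibres of $P\to\widetilde P$, the lemma gives the technical core of part (1): for $a\in F$ one has $a\in H$ if and only if $\beta(a)\in\beta(H)$, which is the identity $\beta^{-1}(\beta(H))=H$ (the preimage taken inside $F$). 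One inclusion is trivial; for the other, if $\beta(a)=\beta(h)$ with $h\in H$, then $a$ and $h$ lie in $F$, have the same unit part modulo $H^{\times}$, and satisfy $\sum_{p\in S_q}\mathsf v_p(a)=\sum_{p\in S_q}\mathsf v_p(h)$ for every $q$; since both valuation vectors are non-negative with equal fibre-sums, finitely many replacement moves carry $h$ to an element of $H$ with the same $P$-part as $a$ --- the moves scheduled so that no coordinate ever drops below its target value --- after which one multiplication by a unit of $H^{\times}$ (killed by $\beta$) produces $a$. A one-line computation also gives $\ker\beta\cap F=H^{\times}$ (a non-negative integer vector with vanishing fibre-sums is $0$), so $\beta(H)$ is a submonoid of $\widetilde F$ meeting $\widetilde F^{\times}$ only in $1$, hence reduced.

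To finish (1) I would check that $\beta|_H\colon H\to\beta(H)$ is a transfer homomorphism, whereupon $\mathcal L(H)=\mathcal L(\beta(H))$ is formal. Surjectivity, $\beta(H)^{\times}=\{1\}$ and $\beta^{-1}(1)\cap H=H^{\times}$ are the facts just obtained and give axiom (T1). For (T2): given $a\in H$ with $\beta(a)=bc$ in $\beta(H)$, lift $b=\beta(a_1)$, $c=\beta(a_2)$ with $a_1,a_2\in H$; since $\beta(a_1a_2)=\beta(a)$, the fibre-sums of $a$ and of $a_1a_2$ coincide, so one distributes the valuations of $a$ over two factors $a'$, $a''$ matching the fibre-sums of $a_1$ and of $a_2$ respectively (absorbing the unit discrepancy into $a'$), obtaining $a=a'a''$ in $F$ with $\beta(a')=b$ and $\beta(a'')=c$; the saturation identity then places $a',a''$ in $H$.

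Parts (2)--(5) transport structure along $\beta$, resting on $\beta(\mathsf q(H))=\mathsf q(\beta(H))$ (immediate, as $\beta$ is a group homomorphism on $\mathsf q(F)$) and on the saturation identity. In (2), $\beta(H)$ is $v$-noetherian because the ascending chain condition on divisorial ideals descends from $H$ through $\beta$: using the saturation identity one relates the operation $X\mapsto(\beta(H)\DP X)$ to $X\mapsto(H\DP X)$ tightly enough to pull an ascending chain of $v$-ideals of $\beta(H)$ back to one of $H$. In (3), $\beta(c)\beta(x)^n\in\beta(H)$ for all $n$ is equivalent, by saturation, to $cx^n\in H$ for all $n$, which yields $\widehat{\beta(H)}=\beta(\widehat H)$; and $\beta(c)\widehat{\beta(H)}=\beta(c\widehat H)\subseteq\beta(H)$ for $c\in(H\DP\widehat H)$ gives $(\beta(H)\DP\widehat{\beta(H)})\ne\emptyset$ (so, with (2), $\widehat{\beta(H)}$ is Krull). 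In (4), a unit $\beta(x)$ of $\beta(\widehat H)$ forces $xy\in\ker\beta\cap F=H^{\times}$ for some $y\in\widehat H$, hence $x\in\widehat H^{\times}$ and $\widehat{\beta(H)}^{\times}=\beta(\widehat H^{\times})=\widehat H^{\times}/H^{\times}$; then $\widehat{\beta(H)}_{\red}$ is the image of $\widehat H$ under $\beta$ followed by the projection $\widetilde F\to\mathcal F(\widetilde P)$, and one verifies that this realises $\widehat{\beta(H)}_{\red}\hookrightarrow\mathcal F(\widetilde P)$ as a divisor theory, the $(H,F)$-classes of the primes of $\widehat H$ serving as the prime divisors. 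Finally (5) is the saturation identity applied to the elements $up^n$ with $u\in\widehat H^{\times}$ (which lie in $F$): $up^n\in H$ if and only if $\beta(u)\beta(p)^n=\beta(up^n)\in\beta(H)$, and $\beta(u)=uH^{\times}\in\widehat H^{\times}/H^{\times}$.

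The main obstacle is the saturation identity: turning ``redistribute the valuations fibre by fibre'' into an explicit finite sequence of replacement moves that stays inside $H$ throughout, since the replacement lemma requires the source coordinate to be positive and so the moves must be ordered with care. Granting that, parts (1), (3) and (5) are essentially formal. The remaining non-formal points live in (2) and (4): the descent of $v$-noetherianity in (2) needs the compatibility of $\beta$ with the divisorial operation rather than merely with set-theoretic preimages; and the divisor-theory claim in (4) requires choosing, for each prime $p$ of $\widehat H$, a finite family of elements of $\widehat H$ having $p$ as $\gcd$ whose images in $\mathcal F(\widetilde P)$ still have $[p]_H^F$ as $\gcd$.
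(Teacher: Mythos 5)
Your arguments for (1), (3) and (5) are essentially sound and, for (1), more self-contained than the paper, which simply cites \cite{Ge-HK06a} (Theorem 3.2.8 and Proposition 3.2.3.5) for exactly the package you prove by hand: the replacement lemma, the fibre-sum bookkeeping giving $\beta^{-1}(\beta(H))\cap F=H$, and the verification that $\beta|_H$ is a transfer homomorphism. Part (5) is the same one-line argument as in the paper. One caveat, which you share with the paper's own proof of (3): the saturation identity is proved (and can only hold) for elements of $F$, but in (3) you apply it to $cx^n$ with $x\in\mathsf q(H)$, which need not lie in $F$ when distinct primes share an $(H\DP F)$-class; for the inclusion $\widehat{\beta(H)}\subset\beta(\widehat H)$ one should first observe that an element of $\widehat{\beta(H)}$ lies in $\widetilde F$ (as $\widetilde F$ is factorial, hence completely integrally closed) and then choose a preimage inside $F\cap\mathsf q(H)$, which is possible because, the conductor being nonempty, $(H,F)$-equivalent primes have the same class in $\mathcal C(\widehat H)$. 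Since the paper makes the same jump, I do not count this against you, but your own formulation ``preimage taken inside $F$'' makes the overreach visible.

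The genuine gaps are (2) and the divisor-theory half of (4), and you acknowledge both without closing them. For (2) the missing idea is the characterization of $v$-noetherian monoids in \cite{Ge-HK06a}, Proposition 2.1.10.1: $H$ is $v$-noetherian if and only if for every subset $X\subset H$ there is a finite $X_0\subset X$ with $(H \DP X)=(H \DP X_0)$. With this the paper's proof is three lines: pull $E\subset\beta(H)$ back to $E'\subset H$, take the finite $E_0\subset E'$, and compute $(\beta(H)\DP \beta(E'))=\beta((H\DP E'))=\beta((H\DP E_0))=(\beta(H)\DP \beta(E_0))$; your plan to ``pull an ascending chain of $v$-ideals back to $H$'' names the goal but supplies no mechanism, and the needed compatibility of $\beta$ with $(\,\cdot\,\DP\,\cdot\,)$ is precisely the content you would still have to prove (again with care about preimages in $\mathsf q(H)$ versus $F$). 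For (4) the paper obtains $\widehat{\beta(H)}^{\times}=\beta(\widehat H^{\times})$ as you do, then uses $\widehat H\cong\widehat H^{\times}\times\widehat H_{\red}$ to identify $\widehat{\beta(H)}_{\red}\cong\beta(\widehat H_{\red})$ and asserts that the image under $\beta$ of the divisor theory $\widehat H_{\red}\hookrightarrow\mathcal F(P)$ is again a divisor theory into $\mathcal F(\widetilde P)$. Your worry about the gcd condition is exactly the non-trivial point, and it is not a formality: identifying primes in the same $(H,F)$-class can destroy it (already for a reduced Krull monoid $H=\widehat H$ with class group $\Z/3\Z$ and exactly two primes, both in the same generating class, the induced embedding $\beta(H)\hookrightarrow\mathcal F(\widetilde P)$ is a divisor homomorphism but the single class in $\widetilde P$ is not a gcd of image elements), so some additional argument or hypothesis beyond the bare assertion is required. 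As it stands, then, your proposal establishes (1), (3), (5) and the unit statement in (4), and leaves (2) and the divisor-theory statement in (4) open.
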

\begin{proof}
1.  follows from \cite[Theorem 3.2.8 and Proposition 3.2.3.5]{Ge-HK06a}.

2. $\beta(\mathsf q(H))=\mathsf q(\beta(H))$ is clear. Let $E\subset \beta(H)$. Then there exists $E'\subset H$ such that $\beta(E')=E$. Since $H$ is $v$-noetherian, it follows by \cite[Proposition 2.1.10.1]{Ge-HK06a} that there exists a finite subset $E_0\subset E'$ such that $(H \DP E')=(H \DP E_0)$. Thus $$(\beta(H) \DP E)=\beta((H \DP E'))=\beta((H \DP E_0))=(\beta(H) \DP  \beta(E_0))\,.$$
It follows by \cite[Proposition 2.1.10.1]{Ge-HK06a} again that $\beta(H)$ is $v$-noetherian.

3. Let $x\in \beta(\widehat{H})$. Then there exists $y\in \widehat{H}$ such that $x=\beta(y)\in \beta(\mathsf q(H))=\mathsf q(\beta(H))$. Since $y\in \widehat{H}$, there exist $c\in H$ such that $cy^n\in H$ for all $n\in \N$. Therefore $\beta(c)\in \beta(H)$ and $\beta(c)\beta(y)^n\in \beta(H)$ for all $n\in \N$. It follows that $x=\beta(y)\in \widehat{\beta(H)}$.

Let $x\in \widehat{\beta(H)}$. Then there exists $c'\in \beta(H)$ such that $c'x^n\in \beta(H)$ for all $n\in \N$. Since there exist $y\in \mathsf q(H)$ and $c\in H$ such that $\beta(y)=x$ and $\beta(c)=c'$, we have $\beta(cy^n)\in \beta(H)$ for all $n\in \N$. Therefore $cy^n\in \beta^{-1}(\beta(cy^n))\subset \beta^{-1}(\beta(H))=H$ for all $n\in \N$. It follows that $y\in \widehat{H}$ and $x=\beta(y)\in \beta(\widehat{H})$.

Therefore $\widehat{\beta(H)}=\beta(\widehat{H})$ and $(\beta(H) \DP \widehat{\beta(H)})=(\beta(H):\beta(\widehat{H}))=\beta((H \DP \widehat{H}))\neq \emptyset$.

4. Note that $\beta(\widehat{H}^{\times})\subset \beta(\widehat{H})\subset \widetilde{F}$. Then $\beta(\widehat{H}^{\times})\subset \beta(\widehat{H})^{\times}\subset \widetilde{F}^{\times}=\widehat{H}^{\times}/H^{\times}=\beta(\widehat{H}^{\times})$ which implies that $\beta(\widehat{H})^{\times}=\beta(\widehat{H}^{\times})$.

By \cite[Theorem 2.4.8.2]{Ge-HK06a}, we obtain $\widehat{H}\cong \widehat{H}^{\times}\times \widehat{H}_{\red}$ and $\beta(\widehat{H})\cong\beta(\widehat{H})^{\times}\times \beta(\widehat{H})_{\red}$.  Therefore $\beta(\widehat{H})^{\times}\times \beta(\widehat{H})_{\red}\cong\beta(\widehat{H}^{\times})\times \beta(\widehat{H}_{\red})$ and hence $ \beta(\widehat{H})_{\red}\cong\beta(\widehat{H}_{\red})$. Since $\widehat{H}_{\red}\hookrightarrow \mathcal F(P)$ is a divisor theory, we obtain that $\beta(\widehat{H})_{\red}\cong\beta(\widehat{H}_{\red})\hookrightarrow \beta(\mathcal F(P))=\mathcal F(\widetilde{P})$ is a divisor theory

5. Let $p\in P$, $n\in \N$ and $u\in \widehat{H}^{\times}$. Since $\beta^{-1}(\beta(H))=H$, we have $up^n\in H$ if and only if $\beta(u)\beta(p)^n\in \beta(H)$. The assertion follows immediately.
\end{proof}

\smallskip
Let $H$ be a monoid and let $S\subset H_{\red}\setminus \{1_{H_{\red}}\}$ be a generating set  of $H_{\red}$. The  monoid  $\mathsf Z^S (H) = \mathcal F \bigl(
S\bigr)$  is called the  {\it factorization
monoid}  of $H$ {\it with respect to $S$}, and  the unique homomorphism
\[
\pi^S \colon \mathsf Z^S (H) \to H_{\red} \quad \text{satisfying} \quad
\pi^S (u) = u \quad \text{for each} \quad u \in S
\]
is  the  {\it factorization homomorphism}  of $H$ with respect to $S$.
If $S=\mathcal A(H_{\red})$, then we set
 $\mathsf Z (H) :=\mathsf Z^S(H)$,  $\pi^S=\pi$, and we observe that $\mathsf L(a)=\{|z|\mid z\in \mathsf Z(H)\text{ with }\pi(z)=aH^{\times}\}$.

\begin{proof}[Proof of Theorem \ref{1.1}]
To recall the notations of Theorem \ref{1.1}, let
$H$ be a $v$-noetherian monoid with $(H \DP \widehat{H})\neq \emptyset$ and suppose that $\mathcal C (\widehat H)$ is a torsion group. Let $F=\widehat{H}^{\times}\times \mathcal F(P)$ such that $\widehat{H}_{\red}\hookrightarrow \mathcal F(P)$ is a divisor theory,   and let $E=\{p\in P\mid \text{ there exists }n\in \N \text{ such that } p^n\in H\widehat{H}^{\times}\}$.

1. Suppose  $H \subset F$ is not simple. We proceed in two steps. First we show that $\rho_k (H) = \infty$ for all sufficiently large $k \in \N$. In a second step we show that the sets of distances $\Delta(\mathcal U_k(H))$ of all $\mathcal U_k (H)$ are finite for all sufficiently large $k \in \N$. These two results imply, by \cite[Theorem 2.20]{Tr18a}, that $H$ satisfies the Structure Theorem for Unions (Note that the fact $\rho_k (H) = \infty$ for all sufficiently large $k \in \N$  implies that for any fixed $\ell\in \N$, the interval  $[\rho_{k-\ell}, \rho_k]$ is empty for  all sufficient large $k\in \N$).

1.(a) Since $H \subset F$ is not simple,  there exists $a\in \mathcal A(H)$ such that $|\supp_P(a)|\ge 2$ and $\supp_P(a)$ is a minimal $H$-essential subset. Since \cite[Proposition 2.4.5 and Theorem 2.4.7]{Ge-HK06a} implies that
 $\mathsf q(F)/\mathsf q(\widehat{H})=\mathcal C_{v}(\widehat{H})$ is a torsion group, it follows that for every $p\in \supp_P(a)$, there is an $\alpha_p\in \N$ such that $p^{\alpha_p}\mathsf q(\widehat{H})=\mathsf q(\widehat{H})$ and hence $p^{\alpha_p}\in \widehat{H}$.

Set $\alpha=\prod_{p\in \supp_P(a)}\alpha_p$ and suppose $a=u\prod_{p\in \supp_P(a)}p^{\mathsf v_p(a)}$, where $u\in \widehat{H}^{\times}$.
By the definition of complete integral closure, for every $p\in \supp_P(a)$, there exists $a_p\in H$ such that $a_p(p^{\alpha\mathsf v_p(a)})^n\in H$ holds for all $n\in \N$.
Fix  $p_0\in \supp_P(a)$. Since $u^{\alpha}p_0^{\alpha \mathsf v_{p_0}(a)}\in \widehat{H}$, there exist $b_0\in H$ such that $b_0(u^{n\alpha}p_0^{\alpha \mathsf v_{p_0}(a)})^n\in H$ holds for all $n\in H$.
It follows that for every $n\in \N$,
\begin{align*}
W_n=&b_0u^{n\alpha}p_0^{n\alpha \mathsf v_{p_0}(a)}\cdot\prod_{p\in \supp_P(a)\setminus\{p_0\}}a_pp^{n\alpha\mathsf v_p(a)}\\
= &(u^{n\alpha}\prod_{p\in \supp_P(a)}p^{n\alpha\mathsf v_p(a)})\cdot (b_0\prod_{p\in \supp_P(a)\setminus\{p_0\}}a_p)\\
=&a^{n\alpha}\cdot(b_0\prod_{p\in \supp_P(a)\setminus\{p_0\}}a_p)\in H\,.
\end{align*}
Since $|\supp_P(a)|\ge 2$ and $\supp_P(a)$ is a minimal $H$-essential subset, we obtain
$$\min\mathsf L\left(b_0u^{n\alpha}p_0^{n\alpha \mathsf v_{p_0}(a)}\right)\le |b_0|_{F}\text{ and }\min \mathsf L(a_pp^{n\alpha\mathsf v_p(a)})\le |a_p|_{F}\text{ for every }p\in \supp_P(a)\setminus\{p_0\}\,.$$

Therefore $\min \mathsf L(W_n)\le |b_0|_{F}+\sum_{p\in \supp_P(a)\setminus\{p_0\}}|a_p|_{F}$ while $\max\mathsf L(W_n)\ge n\alpha$.
Let $k\ge N_0=|b_0|_{F}+\sum_{p\in \supp_P(a)\setminus\{p_0\}}|a_p|_{F}$.
We infer that
\[
\rho_k(H)\ge \rho_{N_0}(H)\ge \rho_{\min\mathsf L(W_n)}(H)\ge n\alpha \text{ for all } n\in \N\,,
\]
whence $\rho_{k}(H)=\infty$ and $\rho(H)=\infty$.

\smallskip
1(b). Since $W_{n+1}=a^{\alpha}W_{n}$ for all $n\in \N$, we obtain that
\[
\{\min\mathsf L(W_i)+(n-i)\alpha\mid i\in [1,n]\}\subset \mathsf L(W_n) \text{ for all }n\in \N\,.
\]

Let $k\ge N_0$ and let  $N_1= \min \mathsf L(b_0\prod_{p\in \supp_P(a)\setminus\{p_0\}}a_p)$.
 Since $k\in k-\min\mathsf L(W_n)+\mathsf L(W_n)\subset \mathsf L(a^{k-\min\mathsf L(W_n)}W_n)$ and $N_1+n\alpha\in \mathsf L(W_n)$ for all $n\in \N$, we infer that
$$c_n:=k-\min\mathsf L(W_n)+N_1+n\alpha\in k-\min\mathsf L(W_n)+\mathsf L(W_n)\subset \mathcal U_{k}(H)$$ for all $n\in \N$.
Let $n_0$ be the maximal integer such that $k-(\min\mathsf L(W_1)+(n_0-1)\alpha)\ge 0$. Then $k\in k-(\min\mathsf L(W_1)+(n_0-1)\alpha)+\mathsf L(W_{n_0})\subset \mathsf L(a^{k-(\min\mathsf L(W_1)+(n_0-1)\alpha)}W_{n_0})$ which implies that
\[
d_i:=k-(i-1)\alpha+\min\mathsf L(W_i)-\min\mathsf L(W_1)\in \mathcal U_k(H) \text{ for all } i\in [1,n_0]\,.
\]

It follows that
\begin{align*}
\max\Delta(\mathcal U_k(H))&\le \max\left\{d_{n_0}, \max\{d_{i}-d_{i+1}\mid i\in [1, n_0-1]\}, c_1-d_1, \max\{d_{n+1}-d_n\mid n\in \N\}\right\} \\
&\le \max\left\{\alpha+N_0, \alpha+N_0, -\min\mathsf L(W_1)+N_1+\alpha, N_0+\alpha\right\} \\
&\le N_0+N_1+\alpha\,.
\end{align*}
Thus the sets $\Delta (\mathcal U_k (H))$ are finite for all $k \ge N_0$.

%This, together with the fact that
%$\rho_k(H)=\infty$ for all $k\ge N_0$ (which  implies that for all $\ell, N\in \N$, the interval  $[\rho_{k-\ell}+\ell, \rho_k-N]$ is empty for all $k\ge N_0+\ell$), implies that  $H$ satisfies the Structure Theorem for Unions (Applying \cite[Theorem 2.20]{Tr18a} for  $\mathcal L(H)$).

\medskip
2.  Suppose that  $H \subset F$ is simple, $\mathcal C_{v}(\widehat{H})$ is a torsion group, and $\mathcal C_E (H,F)$ is finite. Again we proceed in two steps. First we show that $\rho (H) \in \Q$. In the second step we show that there is a constant $M$ such that
\[
\rho_{k+1}(H) - \rho_k (H) \le M \quad \text{and} \quad \lambda_k (H) - \lambda_{k+1} (H) \le M \,.
\]
These two results imply, by \cite[Theorem 1.1]{Tr18a}, that $H$ satisfies the Structure Theorem for Unions.

To fix notation, let $\widetilde{F}=\widehat{H}^{\times}/H^{\times}\times \mathcal F(\widetilde{P})$ with $\widetilde{P}=\{[p]_H^F\mid p\in P\}$ and let $\beta \colon \mathsf q(F)\rightarrow \mathsf q(\widetilde{F})$ be the unique homomorphism satisfying $\beta(p)=[p]_H^F$ for all $p\in P$ and $\beta(u)=uH^{\times}$ for all $u\in \widehat{H}^{\times}$.
By Proposition \ref{3.1}, it suffices to prove the assertions for $\beta(H)$. Since $\mathcal C_E(H,F)$ is finite, to simplify the notation,
we may show the assertions for $H$ with  $E$ is finite.

2.(a)
We define that
\begin{align*}
\phi \colon F=F^{\times}\times \mathcal F(P)&\rightarrow\mathcal F(E)\\
 a=w\prod_{p\in P}p^{\mathsf v_p(a)}     &\mapsto       \phi(a)= \prod_{p\in E}p^{\mathsf v_p(a)}\,.
\end{align*}
Then $\phi(H)$ is a submonoid of $\mathcal F(E)$ and $\phi(\mathcal A(H))\subset \phi(H)\setminus \{1\}$ is a generating set of $\phi(H)$.

Since $H$ is $v$-noetherian with $(H \DP \widehat{H})\neq \emptyset$ and $E$ is finite, it follows by \cite[Theorem 4.2.2]{Ge-Ha08b} and \cite[Lemma 5.4.1]{Ge-Ha08a}  that $\sup\{\mathsf v_p(u)\mid p\in E \text{ and }u\in \mathcal A(H)\}<\infty$ which implies that
 $S=\phi(\mathcal A(H))$ is a finite set. Then $\mathsf Z^S(\phi(H))\times\mathsf Z^S(\phi(H))$  is a finitely generated monoid.
Let
$$H_0=\{(x,y)\in\mathsf Z^S(\phi(H))\times\mathsf Z^S(\phi(H))\mid \pi^S(x)=\pi^S(y)\}\,,$$
and hence $H_0\hookrightarrow \mathsf Z^S(\phi(H))\times\mathsf Z^S(\phi(H))$ is a divisor homomorphism. It follows by \cite[Proposition 2.7.5]{Ge-HK06a} that $H_0$ is a finitely generated monoid. For each $(x,y)\in H_0\setminus \{1_{H_0}\}$, we have both $|x|_{\mathsf Z^S(\phi(H))}\neq 0$ and $|y|_{\mathsf Z^S(\phi(H))}\neq 0$. Therefore
\[\begin{aligned}
\sup\left\{\frac{|y|_{\mathsf Z^S(\phi(H))}}{|x|_{\mathsf Z^S(\phi(H))}}\mid (x,y)\in H_0\setminus \{1_{H_0}\}\right\}&=\sup\left\{\frac{|y|_{\mathsf Z^S(\phi(H))}}{|x|_{\mathsf Z^S(\phi(H))}}\mid (x,y)\in \mathcal A(H_0)\right\}\\
&=\max\left\{\frac{|y|_{\mathsf Z^S(\phi(H))}}{|x|_{\mathsf Z^S(\phi(H))}}\mid (x,y)\in \mathcal A(H_0)\right\}\,.
\end{aligned}
\]

Let $(x_0,y_0)\in \mathcal A(H_0)$ such that $\frac{|y_0|_{\mathsf Z^S(\phi(H))}}{|x_0|_{\mathsf Z^S(\phi(H))}}=\sup\left\{\frac{|y|_{\mathsf Z^S(\phi(H))}}{|x|_{\mathsf Z^S(\phi(H))}}\mid (x,y)\in H_0\setminus \{1_{H_0}\}\right\}$.
We define that
\begin{align*}
\psi \colon \mathsf Z(H)&\rightarrow \mathsf Z^S(\phi(H))\\
      z=\prod_{i=1}^{n}u_i    &\rightarrow          \psi(z)=\prod_{i=1}^n\phi(u_i)\,.
\end{align*}

Thus for any $z\in \mathsf Z(H)$, the definition of $E$ implies that $|z|_{\mathsf Z(H)}=|\psi(z)|_{\mathsf Z^S(\phi(H))}$.
It follows that
\begin{align*}
\rho(H)&=\sup\left\{\frac{|z_2|_{\mathsf Z(H)}}{|z_1|_{\mathsf Z(H)}}\mid (z_1,z_2)\in \mathsf Z(H)\times \mathsf Z(H)\setminus \{1_{\mathsf Z(H)\times \mathsf Z(H)}\}\text{ with }\pi(z_1)=\pi(z_2)\right\}\\
&=\sup\left\{\frac{|\psi(z_2)|_{\mathsf Z^S(\phi(H))}}{|\psi(z_1)|_{\mathsf Z^S(\phi(H))}}\mid (z_1,z_2)\in \mathsf Z(H)\times \mathsf Z(H)\setminus \{1_{\mathsf Z(H)\times \mathsf Z(H)}\}\text{ with }\pi(z_1)=\pi(z_2)\right\}\\
&\le \sup\left\{\frac{|y|_{\mathsf Z^S(\phi(H))}}{|x|_{\mathsf Z^S(\phi(H))}}\mid (x,y)\in H_0\setminus \{1_{H_0}\}\right\}=\frac{|y_0|_{\mathsf Z^S(\phi(H))}}{|x_0|_{\mathsf Z^S(\phi(H))}}\,.
\end{align*}

Let $a,b\in H$ such that there exist $z_1,z_2\in \mathsf Z(H)$ satisfying that $\pi(z_1)=aH^{\times}$, $\pi(z_2)=bH^{\times}$, and
$(\psi(z_1), \psi(z_2))=(x_0,y_0)$.  suppose \begin{align*}
E_0&=\big(\supp_P(a)\cup \supp_P(b)\big)\setminus E=\{q_1, \ldots, q_{t}\}\subset P,\\
a&=w_1\phi(a)q_1^{\ell_1}\ldots q_t^{\ell_t},\\
b&=w_2\phi(b)q_1^{k_1}\ldots q_t^{k_t},\\
&\text{ where $\ell_1,\ldots, \ell_t, k_1,\ldots, k_t\in \N_0$ and $w_1,w_2\in \widehat{H}^{\times}$. }
\end{align*}

Since \cite[Proposition 2.4.5 and Theorem 2.4.7]{Ge-HK06a} implies that
 $\mathsf q(F)/\mathsf q(\widehat{H})=\mathcal C_{v}(\widehat{H})$ is a torsion group, it follows that  for every $i\in[1,t]$, there is an $\alpha_i\in \N$ such that $q_i^{\alpha_i}\mathsf q(\widehat{H})=\mathsf q(\widehat{H})$ and hence $q_i^{\alpha_i}\in \widehat{H}$.
Set $\alpha=\prod_{i\in [1,t]}\alpha_i$. By the definition of complete integral closure, for every $i\in [2,t]$, there exists $a_i\in H$ such that $a_iq_i^{n\alpha}\in H$ for all $n\in \N$ and there exist $b_1,b_2\in H$ such that $b_1(w_1^{\alpha}q_1^{\alpha\ell_1})^{n}\in H$ and $b_2(w_2^{\alpha}q_1^{\alpha k_1})^{n}\in H$ for all $n\in \N$.

Therefore for every $n\in \N$, we have
\begin{align*}
W_n=&a^{n\alpha}\cdot b_1\cdot b_2w_2^{n\alpha}q_1^{n\alpha k_1}\cdot \prod_{i=2}^ta_iq_i^{n\alpha k_i}\\
=& b_1b_2\prod_{i=2}^ta_i\cdot w_1^{n\alpha}w_2^{n\alpha}\phi(a)^{n\alpha}\prod_{i=1}^tq_i^{n\alpha(\ell_i+k_i)}\\
=&b^{n\alpha}\cdot b_2\cdot b_1w_1^{n\alpha}q_1^{n\alpha \ell_1}\cdot \prod_{i=2}^ta_iq_i^{n\alpha \ell_i}\in H\,.
\end{align*}
Since $\max\mathsf L(b_1\cdot b_2w_2^{n\alpha}q_1^{n\alpha k_1}\cdot \prod_{i=2}^ta_iq_i^{n\alpha k_i})\le |b_1|_F+|b_2|_F+\sum_{i=2}^t|a_i|_F$, we have
$$\frac{|y_0|_{\mathsf Z^S(\phi(H))}}{|x_0|_{\mathsf Z^S(\phi(H))}}\ge \lim_{n\rightarrow \infty}\rho( \mathsf L (W_n))\ge \lim_{n\rightarrow \infty}\frac{n\alpha|y_0|_{\mathsf Z^S(\phi(H))}}{n\alpha|x_0|_{\mathsf Z^S(\phi(H))}+|b_1|_F+|b_2|_F+\sum_{i=2}^t|a_i|_F}=\frac{|y_0|_{\mathsf Z^S(\phi(H))}}{|x_0|_{\mathsf Z^S(\phi(H))}}\,.$$
It follows that $\rho(H)=\frac{|y_0|_{\mathsf Z^S(\phi(H))}}{|x_0|_{\mathsf Z^S(\phi(H))}}$ is rational.

2(b). Set $k_0=|b_1|_F+|b_2|_F+\sum_{i=2}^t|a_i|_F$ and $M=\alpha|y_0|_{\mathsf Z^S(\phi(H))}+2k_0\frac{|y_0|_{\mathsf Z^S(\phi(H))}}{|x_0|_{\mathsf Z^S(\phi(H))}}$. For every $k\in \N$ with $k\ge \alpha|x_0|_{\mathsf Z^S(\phi(H))}+k_0$, choose $n_k\in \N$ such that
$$\{k,k+1\}\subset [n_k\alpha|x_0|_{\mathsf Z^S(\phi(H))}+k_0, (n_k+1)\alpha|x_0|_{\mathsf Z^S(\phi(H))}+k_0]\,.$$
 Since
$\rho_{n_k\alpha|x_0|_{\mathsf Z^S(\phi(H))}+k_0}(H)\ge \max\mathsf L(W_{n_k})\ge n_k\alpha|y_0|_{\mathsf Z^S(\phi(H))}$, we have
\begin{align*}
\rho_{k+1}(H)-\rho_k(H)&\le \rho_{(n_k+1)\alpha|x_0|_{\mathsf Z^S(\phi(H))}+k_0}(H)-\rho_{n_k\alpha|x_0|_{\mathsf Z^S(\phi(H))}+k_0}(H)\\
&\le ((n_k+1)\alpha|x_0|_{\mathsf Z^S(\phi(H))}+k_0)|y_0|_{\mathsf Z^S(\phi(H))}/|x_0|_{\mathsf Z^S(\phi(H))}-n_k\alpha|y_0|_{\mathsf Z^S(\phi(H))}\\
&=\alpha|y_0|_{\mathsf Z^S(\phi(H))}+k_0|y_0|_{\mathsf Z^S(\phi(H))}/|x_0|_{\mathsf Z^S(\phi(H))}\\
&\le M\,.
\end{align*}

 Fix $n_0\in \N$ such that $n_0\alpha|y_0|_{\mathsf Z^S(\phi(H))}>k_0$.
 Since $\max\mathsf L(b_2\cdot b_1w_1^{n\alpha}q_1^{n\alpha \ell_1}\cdot \prod_{i=2}^ta_iq_i^{n\alpha \ell_i})\le k_0$,
   there is a $t_n\in [0, k_0]$ such that $nn_0\alpha|y_0|_{\mathsf Z^S(\phi(H))}+t_n\in \mathsf L(W_{nn_0})$  for every $n\in \N$. For each $k\in \N$ with $k\ge n_0\alpha|y_0|_{\mathsf Z^S(\phi(H))}+k_0$, choose $n_k\in \N$ such that
   $$\{k,k+1\}\subset [n_kn_0\alpha|y_0|_{\mathsf Z^S(\phi(H))}+t_{n_k}, (n_k+1)n_0\alpha|y_0|_{\mathsf Z^S(\phi(H))}+t_{n_k+1}]\,.$$
   Then
   \begin{align*}
   \lambda_k(H)&\le \lambda_{n_kn_0\alpha|y_0|_{\mathsf Z^S(\phi(H))}+t_{n_k}}(H)+(k-n_kn_0\alpha|y_0|_{\mathsf Z^S(\phi(H))}-t_{n_k})\,.\\
   \lambda_{k+1}(H)&\ge \lambda_{(n_k+1)n_0\alpha|y_0|_{\mathsf Z^S(\phi(H))}+t_{n_k+1}}(H)-((n_k+1)n_0\alpha|y_0|_{\mathsf Z^S(\phi(H))}+t_{n_k+1}-(k+1))\,.
   \end{align*}
    Since
$\lambda_{n_kn_0\alpha|y_0|_{\mathsf Z^S(\phi(H))}+t_{n_k}}(H)\le \min\mathsf L(W_{n_kn_0})\le n_kn_0\alpha|x_0|_{\mathsf Z^S(\phi(H))}+k_0$, we have
\begin{align*}
&\quad \lambda_{k}(H)-\lambda_{k+1}(H)\\
&\le \lambda_{n_kn_0\alpha|y_0|_{\mathsf Z^S(\phi(H))}+t_{n_k}}(H)-\lambda_{(n_k+1)n_0\alpha|y_0|_{\mathsf Z^S(\phi(H))}+t_{n_k+1}}(H)+n_0\alpha|y_0|_{\mathsf Z^S(\phi(H))}+t_{n_k+1}-t_{n_k}-1\\
&\le  n_kn_0\alpha|x_0|_{\mathsf Z^S(\phi(H))}+k_0-((n_k+1)n_0\alpha|y_0|_{\mathsf Z^S(\phi(H))}+t_{n_k+1})|x_0|_{\mathsf Z^S(\phi(H))}/|y_0|_{\mathsf Z^S(\phi(H))}+k_0\\
&=2k_0-n_0\alpha|x_0|_{\mathsf Z^S(\phi(H))}-t_{n_k+1}|x_0|_{\mathsf Z^S(\phi(H))}/|y_0|_{\mathsf Z^S(\phi(H))}\\
&\le 2k_0\\
&\le M\,.\qedhere
\end{align*}
%It follows by  \cite[Theorem 1.1]{Tr18a} that $H$ satisfies the Structure Theorem for Unions.
\end{proof}

\section*{Acknowledgements}

The author would like to thank Alfred Geroldinger from University of Graz   for reading the manuscript very carefully and affording very important and useful comments.

\providecommand{\bysame}{\leavevmode\hbox to3em{\hrulefill}\thinspace}
\providecommand{\MR}{\relax\ifhmode\unskip\space\fi MR }
% \MRhref is called by the amsart/book/proc definition of \MR.
\providecommand{\MRhref}[2]{%
  \href{http://www.ams.org/mathscinet-getitem?mr=#1}{#2}
}
\providecommand{\href}[2]{#2}

\end{document}